\theoremstyle{plain}
\newtheorem{theorem}{Теорема}
\newtheorem{definition}{Определение}
\theoremstyle{definition}
\newtheorem{remark}{Замечание}
\newtheorem{example}{Пример}
\numberwithin{theorem}{section}
\numberwithin{lemma}{section}
\numberwithin{proposition}{section}
\numberwithin{corollary}{section}
\numberwithin{remark}{section}
\numberwithin{definition}{section}
\numberwithin{example}{section}
\numberwithin{tasks}{section}
\DeclarePairedDelimiter\bracket{(}{)}
\newcommand{\br}[1]{\bracket*{#1}}
\DeclarePairedDelimiter\figbracket{\{}{\}}
\newcommand{\fbr}[1]{\figbracket*{#1}}
\DeclareMathOperator{\argmin}{arg\,min}
\author{Ф. С. Стонякин}
\title{Об адаптивном проксимальном методе для некоторого класса вариационных неравенств и смежных задач}
\begin{document}
\maketitle

\begin{abstract}
Для задач безусловной оптимизации хорошо известна концепция неточного оракула, предложенная О. Деволдером, Ф. Глинером и Ю.Е. Нестеровым. В настоящей работе введен аналог понятия неточного оракула (модели функции) для абстрактных задач равновесия, вариационных неравенств и седловых задач. Это позволило предложить аналог известного проксимального метода А.С. Немировского для вариационных неравенств с адаптивной настройкой на уровень гладкости для достаточно широкого класса задач. При этом предусмотрена возможность неточного решения вспомогательных задач проектирования на итерациях метода. Показано, что возникающие погрешности не накапливаются в ходе работы метода. Получены оценки скорости сходимости предложенного метода. Обоснована оптимальность метода с точки зрения теории нижних оракульных оценок. Показано, что предложенный метод применим к смешанным вариационным неравенствам и композитным седловым задачам. Приведен пример, демонстрирующий возможность существенного повышения качества работы метода по сравнению с теоретическими оценками за счет адаптивности критерия остановки.
\end{abstract}

\section{Введение}

Вариационные неравенства (ВН) и седловые задачи часто возникают в самых разных проблемах оптимизации и имеют многочисленные приложения в математической экономике, математическом моделировании транспортных потоков, теории игр и других разделах математики (см., например, \cite{FaccPang_2003}). Исследования в области алгоритмических методов решения вариационных неравенств и седловых задач активно продолжаются (см., например, \cite{Antipin_2000}--\cite{Nesterov_Doct}). Наиболее известным аналогом градиентного метода для ВН является экстраградиентный метод Г.М. Корпелевич \cite{Korpelevich}, в качестве одного из современных вариантов которого можно выделить проксимальный зеркальный метод А.С. Немировского \cite{Nemirovski_2004}. В работе предложен аналог этого метода на базе ряда возникших в последние годы новых идей в области алгоритмической оптимизации, о которых мы скажем несколько слов \cite{Nesterov_2015, Gasn_2017, Ston_2017, Pap_Model}.

Некоторое время назад Ю.Е. Нестеровым в \cite{Nesterov_2015} предложены так называемые {\it универсальные} градиентные методы для задач выпуклой минимизации. Под универсальностью метода понимается возможность адаптивной настройкой метода на уровень гладкости задачи, что может позволить ускорять работу метода (\cite{Nesterov_2015}, см. также раздел 5 из \cite{Gasn_2017}).

Недавно нами совместно с А.В. Гасниковым, П.Е. Двуреченским и А.А. Титовым был предложен универсальный проксимальный аналог метода А.С. Немировского для решения вариационных неравенств (см. замечание 5.1 из \cite{Gasn_2017} и статью \cite{Ston_2017}). При этом предложенная методика позволяет учесть возможность неточного задания оракула для оператора поля. Для оператора $g: Q \rightarrow \mathbb{R}^n$, заданного на выпуклом компакте $Q\subset \mathbb{R}^n$ под {\it сильным вариационным неравенством} понимаем неравенство вида
\begin{equation}\label{Old_1}
  \langle g(x_*),\ x_*-x\rangle \leqslant 0,
\end{equation}
где $g$ удовлетворяет условию Гельдера:
\begin{equation}\label{eq101}
||g(x)-g(y)||_{\ast}\leqslant L_{\nu}||x-y||^{\nu} \quad \forall x,y\in Q
\end{equation}
для произвольного $\nu\in[0;1]$, причем $L_{0}<+\infty$ (другие константы $L_{\nu}$ ($\nu \neq 0$) могут быть бесконечными).

Отметим, что в \eqref{Old_1} требуется найти $x_* \in Q$ (это $x_*$ и называется решением ВН) для которого
\begin{equation}\label{Old_2}
  \max_{x\in Q}\langle g(x_*),\ x_*-x\rangle \leqslant 0.
\end{equation}
Для монотонного оператора поля $g$ можно рассматривать {\it слабые вариационные неравенства}
\begin{equation}\label{Old_11}
  \langle g(x),\ x_*-x\rangle \leqslant 0.
\end{equation}
Обычно в \eqref{Old_11} требуется найти $x_* \in Q$, для которого \eqref{Old_11} верно при всех $x \in Q$.
Предложенная в \cite{Gasn_2017, Ston_2017} методика позволяет получить приближённое решение задач \eqref{Old_1} -- \eqref{Old_11} с точностью $\varepsilon$ с оценкой сложности (достаточного количества итераций для достижения приемлемого качества решения)
\begin{equation}\label{eq_opt_estim}
O\left(\left(\frac{1}{\varepsilon}\right)^{\frac{2}{1+\nu}}\right),
\end{equation}
которая оптимальна при $\nu = 0$ и $\nu = 1$ \cite{GuzNem_2015, NemYud_1979, Nemirovski_1994_95, Optimal}.
При этом адаптивность метода \cite{Gasn_2017, Ston_2017} на практике может приводить к ускорению работы метода по сравнению с оценками \eqref{eq_opt_estim} (подробнее об этом написано в замечании \ref{Rem_Steiner} ниже).

Наконец, недавно А. В. Гасниковым в разделе 3 пособия \cite{Gasn_2017} (см. также \cite{Pap_Model}) предложена абстрактная концепция {\it ($\delta, L$)-модели} функции, которая является прямым обобщением известного понятия \textit{$(\delta, L)$-оракула} О. Деволдера -- Ф. Глинера -- Ю.Е. Нестерова для задач безусловной оптимизации (см. работу \cite{a4}, которая активно цитируется оптимизационным сообществом).

\begin{definition}
\label{delta_L_oracle}
Пара $(f_{\delta}(y), g_{\delta}(y)) \in \mathbb{R} \times E^*$ есть $(\delta, L)$-оракул для функции $f: Q \rightarrow \mathbb{R}$ в точке $y$, если для любого $x \in Q$ справедливо неравенство
\begin{gather}
\label{DLoracleineq}
0 \leq f(x) - f_{\delta}(y) - \langle g_{\delta}(y), x - y \rangle \leq \frac{L}{2} \|x - y\|^2 + \delta,
\end{gather}
\end{definition}
Отметим, что здесь $E$ --- пространство, содержащее аргументы функции $f$, а $E^*$ --- пространство, сопряжённое (двойственное) к $E$. Концепция $(\delta, L)$-модели функции \cite{a4} в точке отличается от предложенного О. Деволдером, Ю. Е. Нестеровым и Ф. Глинером понятия неточного оракула тем, что линейная функция $\langle g_{\delta}(y), x - y \rangle$ в \eqref{DLoracleineq} заменяется на некоторую абстрактную выпуклую функцию.
\begin{definition}
\label{gen_delta_L_oracle}
Будем говорить, что имеется $(\delta, L)$-модель функции $f(x)$ в точке $y$, и обозначать эту модель $(f_{\delta}(y), \psi_{\delta}(x,y))$, если для любого $x \in Q$ справедливо неравенство
\begin{gather}
\label{exitLDLOrig}
0 \leq f(x) - f_{\delta}(y) - \psi_{\delta}(x,y) \leq \frac{L}{2} \|x - y\|^2 + \delta,
\end{gather}
\begin{gather}
\label{exitLDLOrig_eqaul}
\psi_{\delta}(x,x)=0 \,\,\, \forall x \in Q
\end{gather}
и $\psi_{\delta}(x,y)$~--- выпуклая функция по $x$ для $\forall y \in Q$.
\end{definition}

Отметим, что концепция ($\delta, L$)-модели функции из определения \ref{gen_delta_L_oracle} позволяет обосновать сходимость обычного и быстрого градиентного методов выпуклой минимизации для достаточно широкого класса задач \cite{Gasn_2017,Pap_Model, Ston_Model}.

Настоящая заметка посвящена новому развитию упомянутых выше идей. Мы рассматриваем абстрактную постановку задачи о нахождении точек равновесия и обосновываем возможность использования аналога адаптивного (универсального) метода для вариационных неравенств из \cite{Gasn_2017, Ston_2017} для такой постановки. На базе этой идеологии вводится аналог концепции \textit{$(\delta, L)$-модели} функции для седловых задач. Такой подход позволит распространить методику \cite{Gasn_2017, Ston_2017} на более широкий класс задач, в частности {\it смешанные вариационные неравенства} \cite{Konnov_2017, Bao_Khanh} и {\it композитные седловые задачи} (здесь стоит отметить популярную в оптимизационном сообществе работу \cite{Chambolle}). Далее будем рассматривать задачу нахождения решения $x_*\in Q$ абстрактной задачи равновесия (неравенство Фань Цзы)
\begin{equation}\label{eq13}
\psi(x,x_*)\geqslant 0 \quad \forall x\in Q
\end{equation}
для некоторого выпуклого компакта $Q\subset\mathbb{R}^n$, а также функционала $\psi:Q\times Q\rightarrow\mathbb{R}$. Если предположить абстрактную монотонность функционала $\psi$:
\begin{equation}\label{eq14}
\psi(x,y)+\psi(y,x)\leqslant0\;\;\forall x,y\in Q,
\end{equation}
то всякое решение \eqref{eq13} будет также и решением двойственной задачи равновесия
\begin{equation}\label{eq115}
\psi(x_*,x)\leqslant 0 \quad \forall x\in Q.
\end{equation}
В общем случае сделаем предположение о существовании решения $x_*$ задачи \eqref{eq13}. Приведем пару примеров задания $\psi$, для которых данное условие заведомо выполнено.

\begin{example}
Если для некоторого оператора $g:R\rightarrow\mathbb{R}^n$ положить
\begin{equation}\label{eq16}
\psi(x,y)=\langle g(y),x-y\rangle\;\;\forall x,y\in Q,
\end{equation}
то \eqref{eq13} и \eqref{eq115} будут равносильны соответственно стандартным сильному и слабому вариационному неравенству с оператором $g$.
\end{example}

\begin{example}\label{eq: Mixed_VI}
Для некоторого оператора $g:Q\rightarrow\mathbb{R}^n$ и выпуклого функционала $h:Q\rightarrow\mathbb{R}^n$ простой структуры (см. например, \cite{Gasn_2017}) выбор функционала
\begin{equation}\label{eq17}
\psi(x,y)=\langle g(y),x-y\rangle+h(x)-h(y)
\end{equation}
приводит к {\it смешанному вариационному неравенству} \cite{Konnov_2017, Bao_Khanh}
\begin{equation}\label{eq18}
\langle g(y),y-x\rangle+h(y)-h(x)\leqslant0,
\end{equation}
которое в случае монотонности оператора $g$ влечет
\begin{equation}\label{eq19}
\langle g(x),y-x\rangle+h(y)-h(x)\leqslant0.
\end{equation}
\end{example}

Отметим, что известно немало проксимальных методов для задач нахождения точек равновесия (см. в частности \cite{Antipin_equlib, Semenov} и имеющуюся там библиографию). В частности в \cite{Semenov} предложен проксимальный метод для задач равновесия в гильбертовых пространствах. Однако, как правило, в этих работах лишь исследуются условия сходимости предлагаемых методов без какого-либо обоснования оптимальности скорости сходимости, а также критериев остановки рассматриваемых методов, гарантирующих достижение приемлемого качества решения. Мы же в данной заметке предлагаем для таких задач предлагаем аналог метода А.С. Немировского \cite{Nemirovski_2004}. При этом предлагаемый нами адаптивный критерий остановки за конечное число шагов обеспечивает достижение приемлемого качества приближенного решения по аналогии со случаем вариационных неравенств, рассмотренным в \cite{Nemirovski_2004}.

\begin{remark}
Хорошо известно, что множества решений \eqref{eq13} и \eqref{eq115} совпадают в предположениях $\psi(x,x) = 0$ для всякой точки $x \in Q$, абстрактной монотонности \eqref{eq14}, выпуклости $\psi$ по первой переменной, а также полунепрерывности $\psi$ снизу по первой переменной и слабой полунепрерывности сверху по второй переменной.
\end{remark}

Выделим основные результаты данной работы:

- Как обобщение известного для задач оптимизации понятия $(\delta, L)$-оракула О. Деволдера -- Ф.Глинера -- Ю.Е. Нестерова предложено понятие $(\delta, L)$-модели для абстрактной задачи нахождения точек равновесия.

- Для задач равновесия предложен аналог известного для вариационных неравенств и седловых задач проксимального метода А.С. Немировского (алгоритм \ref{alg:SIGM}). При этом рассмотрено специальное условие гладкости \eqref{eq20}, а также предложен адаптивный критерий остановки метода. Адаптивность позволяет применять метод для задач с неизвестной константой $L$, а также может приводить к ускорению достижения желаемой точности решения.

- Получена оценка скорости сходимости алгоритма \ref{alg:SIGM}, указывающая на его оптимальность с точки зрения теории нижних оракульных оценок (теорема \ref{th1} и замечание \ref{rem_optimal}). При этом учитываются погрешности задания функционала $\psi$, а также решения вспомогательных задач на итерациях метода (см. \eqref{eq22}). Доказано, что погрешности обоих типов не накапливаются в ходе работы метода.

- Введено понятие $(\delta, L)$-модели для седловых задач (определение \ref{Def_Sedlo}). Получена оценка скорости сходимости алгоритма \ref{alg:SIGM} для седловых задач, допускающих $(\delta, L)$-модель (теорема \ref{th_sedlo}). При этом также показана возможность учёта погрешности задания модели седловой задачи, а также погрешности решения вспомогательных задач на итерациях метода.

- Обоснована возможность применения предложенного метода к смешанным вариационным неравенствам (пример \ref{eq: Mixed_VI}) и композитным седловым задачам
(пример \ref{Example_composite}).

\section{Адаптивный проксимальный метод для абстрактных задач равновесия}
Мы предлагаем адаптивный проксимальный метод для задач \eqref{eq13} и \eqref{eq115} при следующих {\bf предположениях} для функционала $\psi$:
\begin{enumerate}
\item[(i)] функционал $\psi(x,y)$ выпуклый по первой переменной;
\item[(ii)] $\psi(x,x)=0\;\;\forall x\in Q$;
\item[(iii)] ({\it абстрактная монотонность}) неравенство \eqref{eq14}
\item[(iv)] ({\it обобщенная гладкость})
\begin{equation}\label{eq20}
\psi(x,y)\leqslant\psi(x,z)+\psi(z,y)+LV(x,z)+LV(z,y)+\delta
\end{equation}
для некоторой фиксированной константы $L>0$, где $\delta>0$~--- некоторая постоянная величина (оценка погрешности задания $\psi$,
степень отклонения от гладкости), а $V(x, y)$ есть расхождение Брэгмана, порожденное $1$-сильно выпуклой прокс-функцией $d:Q\rightarrow\mathbb{R}$ (относительно нормы $||\cdot||$).
\end{enumerate}

Напомним, что \emph{расхождение Брэгмана} широко используется в оптимизации и вводится на базе конечной 1-сильно выпуклой функции $d$ (порождает расстояния), которая дифференцируема во всех точках $x\in Q$:
\begin{equation}\label{3}
V(x,y)=d(x)-d(y)-\langle \nabla d(y),x-y\rangle \quad \forall x,y\in Q,
\end{equation}
где $\langle \cdot,\cdot\rangle$ --- скалярное произведение в $\mathbb{R}^n$. В случае стандартной евклидовой нормы $\|\cdot\|_2$ и расстояния в $\mathbb{R}^n$ можно считать, что:
$$
V(x, y) = d(x - y) = \frac{1}{2} \|x - y\|_2^2 \quad \forall x, y \in Q.
$$
Однако часто возникает необходимость использовать и неевклидовы нормы. В этом случае примеры и свойства дивергенции Брэгмана уже не столь не тривиальны (см., например, раздел 2 пособия \cite{Gasn_2017}).

Отметим, что в случае обычного ВН \eqref{eq16} и евклидовой нормы условие \eqref{eq20} сводится к неравенству
\begin{equation}\label{eq21}
\langle g(z)-g(y),z-x\rangle\leqslant\frac{L}{2}||z-x||^2+\frac{L}{2}||z-y||^2+\delta.
\end{equation}
При $\delta=0$ неравенство \eqref{eq21} легко проверяется, например, для оператора $g(x)=\nabla f(x)$, где $f:Q\rightarrow\mathbb{R}$ есть некоторый выпуклый субдифференцируемый функционал и $\nabla f(x)$ --- произвольный субградиент $f$ в точке $x$. Заметим, что при $\delta=0$ похожее на \eqref{eq20} условие
\begin{equation}\label{eq200}
\psi(x,y)\leqslant\psi(x,z)+\psi(z,y)+ a\|y - z\|^2 + b\|z - x\|^2 \quad  \forall x, y, z \in Q
\end{equation}
($a$ и $b$ --- положительные константы) предложено в \cite{Mastroeni} и использовалось во многих  последующих работах (см., например \cite{Semenov} и имеющуюся там библиографию). Наш подход позволяет работать с неевклидовой прокс-структурой, а также учитывать неточность $\delta$, что важно для идеологии универсальных методов \cite{Nesterov_2015, Gasn_2017, Ston_2017}.

На \eqref{eq21} основан предложенный ранее проксимальный метод для вариационных неравенств (см. замечание 5.1 из \cite{Gasn_2017}, а также статью \cite{Ston_2017}). Естественно возникает идея обобщить этот метод на абстрактные задачи \eqref{eq13} и \eqref{eq115} в предположениях их разрешимости, а также (i)--(iv). При этом будем учитывать погрешность $\delta$ в \eqref{eq20}, а также погрешность $\tilde{\delta}$ решения вспомогательных задач на итерациях согласно одному из достаточно известных в алгоритмической оптимизации подходов (см. например, раздел 3 из \cite{Gasn_2017}, а также \cite{Pap_Model, Ston_Model}):
\begin{equation}\label{eq22}
x:=\argmin_{y\in Q}^{\tilde{\delta}}\varphi(y),\text{ если }\langle\nabla\varphi(x),x-y\rangle\leqslant\tilde{\delta}.
\end{equation}

Опишем $(N+1)$-ую итерацию рассматриваемого метода ($N=0,1,2,\ldots$), выбрав начальное приближение
$$x^0=\argmin\limits_{x\in Q}d(x),$$
зафиксировав точность $\varepsilon>0$, а также некоторую константу $L^0\leqslant2L$.
\begin{algorithm}[h!]\label{alg:SIGM}
\caption{Адаптивный метод для абстрактных задач равновесия.}
\begin{itemize}
\item[1.] $N:=N+1$; $L^{N+1}:=\frac{L^N}{2}$.
\item[2.] Вычисляем:\\
$y^{N+1}:=\argmin\limits_{x\in Q}^{\tilde{\delta}}\fbr{\psi(x, x^N)+L^{N+1}V(x,x^N)}$,\\
$x^{N+1}:=\argmin\limits_{x\in Q}^{\tilde{\delta}}\fbr{\psi(x, y^{N+1})+L^{N+1}V(x,x^N)}$\\
до тех пор, пока не будет выполнено:
\begin{equation}\label{eq23}
\begin{split}
\psi(x^{N+1}, x^N)\leqslant\psi(y^{N+1}, x^N)+\psi(x^{N+1}, y^{N+1})+\\
+L^{N+1}V(y^{N+1}, x^N)+L^{N+1}V(y^{N+1}, y^{N+1})+\delta.
\end{split}
\end{equation}
\item[3.] \textbf{Если} \eqref{eq23} не выполнено, \textbf{то} $L^{N+1}:=2L^{N+1}$ и повторяем п. 2.
\item[4.] \textbf{Иначе} переход к п.~1.
\item[5.] Критерий остановки метода:
\begin{equation}\label{eq24}
\sum_{k=0}^{N-1}\frac{1}{L^{k+1}}\geqslant\frac{\max_{x \in Q}V(x,x^0)}{\varepsilon}.
\end{equation}
\end{itemize}
\end{algorithm}

Для краткости будем всюду далее обозначать
\begin{equation}\label{eq25}
S_N:=\sum\limits_{k=0}^{N-1}\frac{1}{L^{k+1}}.
\end{equation}

Справедлива следующая
\begin{theorem}\label{th1}
После остановки рассматриваемого метода для всякого $x \in Q$ будет заведомо выполнено неравенство:
\begin{equation}\label{eq26}
-\frac{1}{S_N}\sum_{k=0}^{N-1}\frac{\psi(x,y^{k+1})}{L^{k+1}} \leqslant\frac{V(x, x^0)}{S_N}+2\tilde{\delta}+\delta \leqslant \varepsilon+2\tilde{\delta}+\delta,
\end{equation}
a также
\begin{equation}\label{eq27}
\psi(\tilde{y},x)\leqslant\frac{V(x,x^0)}{S_N}+2\tilde{\delta}+\delta \leqslant \varepsilon+2\tilde{\delta}+\delta
\end{equation}
при
\begin{equation}\label{eq28}
\tilde{y}:=\frac{1}{S_N}\sum_{k=0}^{N-1}\frac{y^{k+1}}{L^{k+1}}.
\end{equation}
\end{theorem}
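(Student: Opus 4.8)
\medskip
\noindent\textbf{Proof proposal.}
The plan is to run the by-now-standard telescoping analysis of Nemirovski-type proximal (mirror-prox) schemes, recast for the equilibrium functional $\psi$. Only three ingredients are used: convexity of $\psi$ in the first argument (property (i)), the antisymmetry \eqref{eq14} (property (iii)), and the two guarantees built into Algorithm~\ref{alg:SIGM} --- the inexact optimality \eqref{eq22} for each of the two prox-subproblems, and the accepted inequality \eqref{eq23}. Throughout, $\nabla_1 V(a,b)$ denotes the gradient of $V$ in its first argument, and the key tool is the three-point identity $\langle\nabla_1 V(a,b),\,a-c\rangle = V(a,b)+V(c,a)-V(c,b)$, valid for the Bregman divergence \eqref{3}.

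\emph{Step 1 (one-step recursion).} Fix the iteration index $k$. Writing \eqref{eq22} for $y^{k+1}=\argmin^{\tilde\delta}\{\psi(x,x^k)+L^{k+1}V(x,x^k)\}$ and for $x^{k+1}=\argmin^{\tilde\delta}\{\psi(x,y^{k+1})+L^{k+1}V(x,x^k)\}$, and using that $\psi(\cdot,x^k)$ and $\psi(\cdot,y^{k+1})$ are convex (so the approximate-minimality holds with a (sub)gradient in the first slot of $\psi$ plus $L^{k+1}\nabla_1 V$), the subgradient inequality for $\psi$ together with the three-point identity turns these into, for every $u\in Q$,
\[
\psi(y^{k+1},x^k)-\psi(u,x^k)+L^{k+1}\bigl(V(y^{k+1},x^k)+V(u,y^{k+1})-V(u,x^k)\bigr)\le\tilde\delta ,
\]
\[
\psi(x^{k+1},y^{k+1})-\psi(u,y^{k+1})+L^{k+1}\bigl(V(x^{k+1},x^k)+V(u,x^{k+1})-V(u,x^k)\bigr)\le\tilde\delta .
\]
Now add three inequalities: the second one (generic $u$), the first one specialized to $u:=x^{k+1}$, and \eqref{eq23}. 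Every $\psi$-value cancels except $-\psi(u,y^{k+1})$, and every Bregman term anchored at $x^k$ or at $y^{k+1}$ cancels as well; after dropping a leftover nonnegative term $L^{k+1}V(x^{k+1},y^{k+1})$ one is left with
\[
-\frac{\psi(u,y^{k+1})}{L^{k+1}}\;\le\;\frac{2\tilde\delta+\delta}{L^{k+1}}+V(u,x^k)-V(u,x^{k+1})\qquad\forall\,u\in Q .
\]

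\emph{Step 2 (summation and averaging).} Summing the last display over $k=0,1,\dots,N-1$, the Bregman terms telescope to $V(u,x^0)-V(u,x^N)\le V(u,x^0)$ and $\sum_{k=0}^{N-1}1/L^{k+1}=S_N$ by \eqref{eq25}; dividing by $S_N$ gives the first inequality in \eqref{eq26}, while the second is precisely the stopping rule \eqref{eq24} combined with $V(x,x^0)\le\max_{x\in Q}V(x,x^0)$. For \eqref{eq27}, apply \eqref{eq14} termwise, $\psi(y^{k+1},x)\le-\psi(x,y^{k+1})$, and note that $\tilde y$ from \eqref{eq28} is the convex combination of $y^1,\dots,y^N$ with weights $1/(L^{k+1}S_N)$, which are nonnegative and sum to $1$; hence by convexity of $\psi(\cdot,x)$,
\[
\psi(\tilde y,x)\le\frac1{S_N}\sum_{k=0}^{N-1}\frac{\psi(y^{k+1},x)}{L^{k+1}}\le-\frac1{S_N}\sum_{k=0}^{N-1}\frac{\psi(x,y^{k+1})}{L^{k+1}},
\]
which is bounded by the right-hand side of \eqref{eq26}.

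\emph{Where the work is.} The substantive part is Step~1: one must keep scrupulous track of which Bregman divergence is anchored at which point, and verify that the three inequalities have been chosen so that every $\psi$-value and every $x^k$- and $y^{k+1}$-anchored $V$-term cancels, collapsing everything to the clean one-term-per-step recursion above. (A minor subtlety is that \eqref{eq23} as written carries $V(y^{k+1},y^{k+1})=0$, so the $V(x^{k+1},y^{k+1})$ term that surfaces is simply discarded via $V\ge 0$.) Once that recursion is in hand, Step~2 is just a telescoping sum, the stopping criterion, antisymmetry and Jensen's inequality, with no further appeal to properties (i)--(iv).
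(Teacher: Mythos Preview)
Your proof is correct and follows essentially the same route as the paper: derive the two prox-inequalities from \eqref{eq22} via the three-point identity, add \eqref{eq23}, cancel to the one-step recursion $-\psi(u,y^{k+1})/L^{k+1}\le V(u,x^k)-V(u,x^{k+1})+(2\tilde\delta+\delta)/L^{k+1}$, telescope, and invoke \eqref{eq24}. Your Step~2 actually spells out the passage from \eqref{eq26} to \eqref{eq27} (antisymmetry plus Jensen for the convex combination $\tilde y$) more explicitly than the paper, which simply asserts it.
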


\begin{proof}
После завершения $(N+1)$-ой итерации метода ($N=0,1,2\ldots$) ввиду \eqref{eq22} имеем:
$$\psi(y^{N+1}, x^N)\leqslant\psi(x^{N+1},x^N)+L^{N+1}V(x^{N+1},x^N)-$$
$$-L^{N+1}V(x^{N+1},y^{N+1})-L^{N+1}V(y^{N+1},x^N)+\tilde{\delta},$$
$$\psi(x^{N+1},y^{N+1})\leqslant\psi(x,y^{N+1})+L^{N+1}V(x,x^N)-$$
$$-L^{N+1}V(x,x^{N+1})-L^{N+1}V(x^{N+1},x^N)+\tilde{\delta}.$$

Далее, в силу \eqref{eq23}:
$$
\psi(x^{N+1}, x^N)\leqslant\psi(y^{N+1}, x^N)+\psi(x^{N+1}, y^{N+1})+
$$
$$
+L^{N+1}V(y^{N+1}, x^N)+L^{N+1}V(y^{N+1}, y^{N+1})+\delta.
$$
Отметим, что предположение \eqref{eq20} гарантирует выполнение условия \eqref{eq23} при $L^{N+1} \geqslant L$ после нескольких увеличений
$L^{N+1}$ в 2 раза. Просуммировав последние три неравенства, получаем:
$$\psi(x,y^{N+1})+L^{N+1}V(x,x^N)-L^{N+1}V(x,x^{N+1})\leqslant2\tilde{\delta}+\delta,$$
откуда
$$\frac{\psi(x,y^{N+1})}{L^{N+1}}+V(x,x^N)-V(x,x^{N+1})\leqslant\frac{1}{L^{N+1}}(2\tilde{\delta}+\delta),$$
или после суммирования:
$$-\sum_{k=0}^{N-1}\frac{\psi(x,y^{k+1})}{L^{k+1}}\leqslant\sum_{k=0}^{N-1}\br{V(x,x^k)-V(x,x^{k+1})}+(2\tilde{\delta}+\delta)S_N,$$
откуда и следует доказываемое неравенство \eqref{eq26}.
\end{proof}

\begin{remark}\label{rem_optimal}
Ввиду \eqref{eq20} и выбора $L^0\leqslant2L$ гарантированно будет верно $$L^{k+1}\leqslant2L\;\;\forall k=\overline{0,N-1}.$$
Поэтому $$ S_N\geqslant\frac{N}{2L}$$ и \eqref{eq26}--\eqref{eq27} означают, что для всякого $x \in Q$ будут верны неравенства:
\begin{equation}\label{eq30}
\psi(\tilde{y},x)\leqslant-\frac{1}{S_N}\sum_{k=0}^{N-1}\frac{\psi(x,y^{k+1})}{L^{k+1}}\leqslant\frac{2LV(x, x_0)}{N}+2\tilde{\delta}+\delta\leqslant\varepsilon+2\tilde{\delta}+\delta
\end{equation}
после выполнения не более, чем
\begin{equation}\label{eqv_5.5}
O\left(\frac{1}{\varepsilon}\right)
\end{equation}
итераций предлагаемого метода. При этом нетрудно проверить, количество решений вспомогательных задач в п.2 алгоритма \ref{alg:SIGM} на $N$ итерациях метода не превышает
$$
2N + \log_{2} \frac{L}{L^0},
$$
т.е. стоимость итерации в среднем будет сопоставимой со стоимостью итерации классического экстраградиентного метода, предполагающей решение двух вспомогательных задач на каждой итерации. Отметим, что оценка \eqref{eqv_5.5} с точностью до числового множителя оптимальна для вариационных неравенств и седловых задач \cite{GuzNem_2015, NemYud_1979, Nemirovski_1994_95, Optimal}. Предложенный алгоритм \ref{alg:SIGM} применим и для более широкого класса задач равновесия для функционала $\psi$, удовлетворяющего предположениям (i) -- (iv) выше и с точки зрения количества итераций будет оптимальным с точностью до числового множителя.
\end{remark}

\begin{remark}
Для обычных слабых вариационных неравенств \eqref{Old_11} неравенство \eqref{eq30} можно заменить на
\begin{equation}\label{eq14krit}
\max_{x\in Q} \left \langle g(x), \tilde{y} -x \right\rangle \leqslant \varepsilon + 2 \widetilde{\delta} + \delta.
\end{equation}
Отметим, что именно \eqref{eq14krit} обычно используют как критерий качества решения вариационного неравенства (см., например \cite{Nemirovski_2004}).
\end{remark}

\begin{remark}\label{Rem_Steiner}
В случае гёльдерова оператора поля $g$ (удовлетворяющего \eqref{eq101}) известно неравенство (см. замечание 5.1 из \cite{Gasn_2017}, а также работу \cite{Ston_2017})
\begin{equation}\label{eq21hold}
\langle g(z)-g(y),z-x\rangle\leqslant\frac{L}{2}||z-x||^2+\frac{L}{2}||z-y||^2+ \frac{\varepsilon}{2}
\end{equation}
для некоторой константы $L$, зависящей от $\varepsilon$. На базе интерполяции \eqref{eq21hold} алгоритм 1 сводится к
{\it универсальному} методу для вариационных неравенств \cite{Ston_2017}. Этот метод предполагает адаптивную настройку на уровень гладкости оператора $g$. Заметим, что полученная нами теорема \ref{th1} позволяет обобщить этот подход на смешанные вариационные неравенства (пример \ref{eq: Mixed_VI}) с гёльдеровым оператором $g$.

Оказывается, что настройка метода на уровень гладкости оператора может позволить ускорить работу метода. В частности, для негладкой задачи ($\nu = 0$) метод на практике может работать со скоростью $O\left(\frac{1}{\varepsilon}\right)$, оптимальной для задач с липшицевым оператором ($\nu = 1$). Приведём один такой пример, связанный с известной задачей {\it Ферма-Торричелли-Штейнера}, но при наличии нескольких негладких функциональных ограничений. В общем случае задачи с функциональными ограничениями с использованием принципа множителей Лагранжа сводятся к седловым задачам и соответствующих им вариационным неравенствам.

\textbf{Задача.} Для заданного набора точек $A_k=(a_{1k},a_{2k},\ldots,a_{nk},)$ n-мерного евклидова пространства $\mathbb{R}^n$ найти точку $X=(x_1,x_2,\ldots,x_n)$ такую, что значение целевой функции
$$f(x):=\sum\limits_{k=1}^n\sqrt{(x_1-a_{1k})^2+(x_2-a_{2k})^2+\ldots+(x_n-a_{nk})^2}$$
будет минимальным для всех точек, удовлетворяющих системе негладких функциональных ограничений:
$$\varphi_1(x)=\alpha_{11}|x_1|+\alpha_{12}|x_2|+\ldots+\alpha_{1n}|x_n|-1 \leq 0,$$
$$\varphi_2(x)=\alpha_{21}|x_1|+\alpha_{22}|x_2|+\ldots+\alpha_{2n}|x_n|-1 \leq 0,$$
$$\ldots$$
$$\varphi_m(x)=\alpha_{m1}|x_1|+\alpha_{m2}|x_2|+\ldots+\alpha_{mn}|x_n|-1 \leq 0.$$

При этом коэффициенты $\alpha_{11},\alpha_{12},\ldots,\alpha_{mn}$ образуют матрицу
$$
\begin{pmatrix}
    \alpha_{11} & \alpha_{13} & \dots & \alpha_{1n} \\
    \alpha_{21} & \alpha_{23} & \dots & \alpha_{2n} \\
    \hdotsfor{4} \\
    \alpha_{m1} & \alpha_{m3} & \dots & \alpha_{mn}
\end{pmatrix}
$$
Один из элементов каждой строки этой матрицы содержится в промежутке $(1,10)$, а оставшиеся элементы равны 1. Положим $$L(x,\lambda)=f(x)+\sum\limits_{p=1}^m\lambda_p\varphi_p(x),\;\overrightarrow{\lambda}=(\lambda_1,\lambda_2,\ldots,\lambda_m)$$
и рассмотрим вариационное неравенство:
$$\langle g(x_*,\overrightarrow{\lambda}_*),(x_*,\overrightarrow{\lambda}_*)-(x,\overrightarrow{\lambda})\rangle\leqslant0\;\;\forall(x,\overrightarrow{\lambda})\in B\subset\mathbb{R}^{n+m},$$
где
$$
B=\left\{(x,\overrightarrow{\lambda})\,|\,\sum\limits_{k=1}^nx_k^2+\sum\limits_{p=1}^m\lambda_p^2\leqslant1\right\},
$$
$$
g(x,\lambda)=
\begin{pmatrix}
    \nabla f(x)+\sum\limits_{p=1}^m\lambda_p\nabla\varphi_p(x), \\
    \varphi_1(x),\varphi_2(x),\ldots,\varphi_m(x)
\end{pmatrix}.
$$

Нетрудно понять, что целевой функционал и ограничения недифференцируемы в бесконечном множестве точек, но имеют ограниченные субдифференциалы в смысле выпуклого анализа. Поэтому оператор $g$ удовлетворяет \eqref{eq101} при $\nu = 0$.

Рассмотрим случай $n=10$ переменных и $m=100$ ограничений, начальное приближение
$$x^0= \frac{(0.2,0.2,\ldots,0.2)}{\|(0.2,0.2,\ldots,0.2)\|} \in \mathbb{R}^{n+m},$$
норму и расстояние примем стандартным евклидовым. При этом координаты точек $A_k= (a_{1k}, a_{2k}, \ldots, a_{nk})$ для $k=1,2,\ldots,5$ выберем как столбцы следующей мaтрицы
$$
A=
\begin{pmatrix}
5&4&-7&-2&-3&-8&5&3&8&4\\
-7&-8&-9&-8&-8&6&-4&-8&4&-3\\
-4&-5&8&9&-5&-4&-9&-10&1&9\\
7&-8&7&-8&-5&5&3&-8&-8&-6\\
1&9&-10&-4&-8&-5&-1&-2&1&8
\end{pmatrix}.
$$

В таблице 1 мы приводим зависимость скорости работы (время и количество необходимых итераций) от желаемой точности $\varepsilon$.

\begin{table}[]
\centering
\caption{Скорость работы универсального метода.}
\begin{tabular}{|c|c|c|c|c|c|c|c|c|}
\hline
&$\frac{1}{2}$&$\frac{1}{4}$&$\frac{1}{6}$&$\frac{1}{8}$&$\frac{1}{10}$ &$\frac{1}{12}$&$\frac{1}{14}$&$\frac{1}{16}$ \\ \hline
$K$&1157&2082&3268&4140&5528&6426&7396&8458 \\ \hline
Время&0:02:16&0:04:04&0:06:10&0:08:12&0:10:36&0:11:43&0:13:52&0:15:55 \\ \hline
\end{tabular}
\end{table}


Как видим, метод работает со скоростью $O\left(\frac{1}{\varepsilon}\right)$, которая лучше оптимальной теоретической оценки $O\left(\frac{1}{\varepsilon^2}\right)$ для задач уровня гладкости $\nu = 0$.
\end{remark}

\section{О концепции ($\delta$, L)-модели функции для седловых задач}

Вариационные неравенства возникают, в частности, при решении седловых задач, в которых для выпуклого по $u$ и вогнутого по $v$ функционала $f(u,v):\mathbb{R}^{n_1+n_2}\rightarrow\mathbb{R}$ ($u\in Q_1\subset\mathbb{R}^{n_1}$ и $v\in Q_2\subset\mathbb{R}^{n_2}$) требуется найти $(u_*,v_*)$ такую, что:
\begin{equation}\label{eq31}
f(u_*,v)\leqslant f(u_*,v_*)\leqslant f(u,v_*)
\end{equation}
для произвольных $u\in Q_1$ и $v\in Q_2$. Мы считаем $Q_1$ и $Q_2$ выпуклыми компактами в пространствах $\mathbb{R}^{n_1}$ и $\mathbb{R}^{n_2}$ и поэтому $Q=Q_1\times Q_2\subset\mathbb{R}^{n_1+n_2}$ также есть выпуклый компакт. Для всякого $x=(u,v)\in Q$ будем полагать, что
$$||x||=\sqrt{||u||_1^2+||v||_2^2},$$
где $||\cdot||_1$ и $||\cdot||_2$~--- нормы в пространствах $\mathbb{R}^{n_1}$ и $\mathbb{R}^{n_2}$). Условимся обозначать $x=(u_x,v_x),\;y=(u_y,v_y)\in Q$.

Хорошо известно, что для достаточно гладкой функции $f$ по $u$ и $v$ задача \eqref{eq31} сводится к вариационному неравенству с оператором
\begin{equation}\label{eq32}
g(x)=
\begin{pmatrix}
f_u'(u_x,v_x)\\
-f_v'(u_x,v_x)
\end{pmatrix}.
\end{equation}

Предложим некоторую адаптацию концепции $(\delta, L)$-модели, применимую для седловых задач.

\begin{definition}\label{Def_Sedlo}
Будем говорить, что функция $\psi(x,y)\;(\psi:\mathbb{R}^{n_1+n_2}\times\mathbb{R}^{n_1\times n_2}\rightarrow\mathbb{R})$ есть {\bf $(\delta,L)$-модель} для седловой задачи \eqref{eq31}, если для функционала $\psi$ при всяких $x, y, z \in Q$ выполнены предположения:
\begin{enumerate}
\item[(i)] функционал $\psi(x,y)$ --- выпуклый по первой переменной;
\item[(ii)] $\psi(x,x)=0\;\;\forall x\in Q$;
\item[(iii)] ({\it абстрактная монотонность}) неравенство \eqref{eq14};
\item[(iv)] ({\it обобщенная гладкость})
\begin{equation}\label{eqq20}
\psi(x,y)\leqslant\psi(x,z)+\psi(z,y)+LV(x,z)+LV(z,y)+\delta
\end{equation}
для некоторой фиксированной постоянной $L>0$, где $\delta>0$~--- некоторая постоянная величина (оценка погрешности задания $\psi$,
степень отклонения от гладкости);
\item[(v)] справедливо неравенство:
\begin{equation}\label{eq33}
f(u_y,v_x)-f(u_x,v_y)\leqslant-\psi(x,y) \quad \forall x,y\in Q.
\end{equation}
\end{enumerate}
\end{definition}

\begin{example}\label{Example_composite}
Предложенная концепция модели функции для седловых задач вполне применима, например, для рассмотренных в статье \cite{Chambolle} композитных седловых задач вида:
\begin{equation}\label{eq34}
f(u,v)=\tilde{f}(u,v)+h(u)-\varphi(v)
\end{equation}
для некоторой выпуклой по $u$ и вогнутой по $v$ субдифференцируемой функции $\tilde{f}$, а также выпуклых функций простой структуры $h$ и $\varphi$ (для этих функций операция проектирования на множество не очень затратна). В таком случае можно положить
\begin{equation}\label{eq35}
\psi(x,y)=\langle\tilde{g}(y),x-y\rangle+h(u_x)+\varphi(v_x)-h(u_y)-\varphi(v_y),
\end{equation}
где
$$
\tilde{g}(y)=
\begin{pmatrix}
\tilde{f}_u'(u_y,v_y)\\
-\tilde{f}_v'(u_y,v_y)
\end{pmatrix}.
$$
\end{example}

Действительно, из субградиентных неравенств получаем:
$$\tilde{f}(u_y,v_y)-\tilde{f}(u_x,v_y)\leqslant\langle-\tilde{f}_u'(u_y,v_y),u_x-u_y\rangle,$$
$$\tilde{f}(u_y,v_x)-\tilde{f}(u_y,v_y)\leqslant\langle-\tilde{f}_v'(u_y,v_y),v_x-v_y\rangle.$$
Поэтому имеем
$$\tilde{f}(u_y,v_x)-\tilde{f}(u_x,v_y)\leqslant-\langle\tilde{g}(y),x-y\rangle,$$
откуда
$$f(u_y,v_x)-f(u_x,v_y)=\tilde{f}(u_y,v_x)+h(u_y)-\varphi(v_x)-\tilde{f}(u_x,v_y)-h(v_x)+\varphi(v_y)=$$
$$=\tilde{f}(u_y,v_x)-\tilde{f}(u_x,v_y)+h(u_y)+\varphi(v_y)-h(v_x)-\varphi(v_x)\leqslant$$
$$\leqslant-\langle\tilde{g}(y),x-y\rangle+h(u_y)+\varphi(v_y)-h(v_x)-\varphi(v_x)=-\psi(x,y).$$
Из теоремы \ref{th1} вытекает
\begin{theorem}\label{th_sedlo}
Если для седловой задачи \eqref{eq31} существует $(\delta,L)$-модель $\psi(x,y)$, то после остановки алгоритма \ref{alg:SIGM} получаем точку
\begin{equation}\label{eq36}
\tilde{y}=(u_{\tilde{y}},v_{\tilde{y}}):=(\tilde{u},\tilde{v}):=\frac{1}{S_N}\sum_{k=0}^{N_1}\frac{y_{k+1}}{L^{k+1}},
\end{equation}
для которой верна оценка величины-качества решения седловой задачи:
\begin{equation}\label{eq37}
\max_{v\in Q_2}f(\tilde{u},v)-\min_{u\in Q_1}f(u,\tilde{v})\leqslant \varepsilon +2\tilde{\delta}+\delta.
\end{equation}
\end{theorem}

\section{Заключение}

Таким образом, введённая в работе концепция ($\delta, L$)-модели функции для задач равновесного программирования, позволила распространить ранее предложенный в \cite{Ston_2017} (см. также замечание 5.1 из \cite{Gasn_2017}) универсальный метод для вариационных неравенств на более широкий класс задач. В частности, методика настоящей работы применима к смешанным вариационным неравенствам \cite{Konnov_2017, Bao_Khanh}, а также композитным седловым задачам \cite{Chambolle}. При этом была учтена возможность неточного задания функционала $\psi$, а также неточность решения вспомогательных задач проектирования на итерациях метода. Показано, что при этом сохраняются оптимальные с точки зрения нижних оракульных оценок скорости сходимости метода, а погрешности обоих типов не накапливаются в ходе итераций метода. Приводится пример, иллюстрирующий возможность ускорения работы предложенного метода по сравнению с теоретическими оценками за счёт адаптивного выбора шага и адаптивного критерия остановки (см. замечание \ref{Rem_Steiner} выше).

Отметим некоторые открытые вопросы. Хорошо известно, что в общем случае к седловым задачам с помощью метода множителей Лагранжа сводятся задачи выпуклой условной оптимизации. Допустим, что целевой функционал или функционал ограничения в задаче условной оптимизации не имеет липшицева градиента, но при этом для него существует ($\delta, L$)-модель в смысле определения \ref{gen_delta_L_oracle}. В таком случае ожидается, что соответствующая лагранжиева седловая задача будет иметь ($\delta, L$)-модель в смысле определения \ref{Def_Sedlo}. Отметим, что примеры таких функционалов для безусловных задач приведены в \cite{Pap_Model,Ston_Model}. Представляет интерес исследовать применимость результатов настоящей работы к задачам условной оптимизации с такими функционалами. Также представляется актуальным вопрос о возможности использования похожей методики для каких-либо классов задач невыпуклой оптимизации, а также для вариационных неравенств с немонотонными операторами.

Статья опубликована в \cite{StonProc}. Данная версия текста содержит исправления допущенных ранее опечаток.

\small

\end{document}